\newtheorem{theorem}{Theorem}[section]
\newtheorem{claim}{Claim}
\newtheorem{lemma}[theorem]{Lemma}
\newtheorem{conjecture}[theorem]{Conjecture}
\theoremstyle{definition}
\newcommand{\eps}{\varepsilon}
\newcommand{\bZ}{\mathbb Z}
\DeclareMathOperator{\PG}{PG}
\DeclareMathOperator{\GF}{GF}
\newcommand{\del}{\!\setminus\!}
\begin{document}
\sloppy

\title[Dense binary matroids]{Odd circuits in dense binary matroids}
\author[Geelen]{Jim Geelen}
\address{Department of Combinatorics and Optimization,
University of Waterloo, Waterloo, Canada}
\thanks{This research was partially supported by a grant from the
Office of Naval Research [N00014-10-1-0851].}

\author[Nelson]{Peter Nelson}
\address{Department of Mathematics, Statistics and Operations Research, Victoria University of Wellington, Wellington, New Zealand}

\subjclass{05B35}
\keywords{matroids, regularity}
\date{\today}
\begin{abstract}
We show that, for each real number $\alpha > 0$ and
odd integer $k\ge 5$ there is an integer $c$ such that,
if $M$ is a simple binary matroid
with $|M| \ge \alpha 2^{r(M)}$ and with no $k$-element
circuit, then $M$ has critical number at most $c$.
The result is an easy application of a regularity
lemma for finite abelian groups due to Green. 
\end{abstract}

\maketitle

\section{Introduction}

We prove the following:
\begin{theorem}\label{mainsimple}
For each real number  $\alpha >0$ and odd integer $k \ge 5$,
there exists $c \in \bZ$ such that, if $M$ is a simple binary matroid
$M$ with $|M| \ge \alpha 2^{r(M)}$ and with no $k$-element
circuit, then $M$ has critical number at most $c$.
\end{theorem}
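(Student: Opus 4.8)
The plan is to work with $E := E(M) \subseteq G := \bF_2^{r}$, where $r := r(M)$, so that $|E| \ge \alpha 2^{r}$ and $M$ has no $k$-element circuit. First I would recall the two standard reformulations of the critical number of a simple binary matroid: it is at most $c$ exactly when $G$ has a subspace of codimension $c$ disjoint from $E$, equivalently when $E$ is covered by $c$ affine hyperplanes $\{v : \langle w_{i},v\rangle = 1\}$. Since the critical number is at most $r(M)$, I may assume $r$ exceeds any prescribed constant.

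Next I would apply Green's regularity lemma to $E$ in $G$ with a uniformity parameter $\epsilon = \epsilon(\alpha,k)$ to be fixed, obtaining a subgroup $H \le G$ of index $n$ bounded in terms of $\epsilon$ — hence in terms of $\alpha$ and $k$ — such that $E$ is $\epsilon$-regular with respect to the cosets of $H$. Write $\pi \colon G \to G/H$, identify $G/H$ with $\bF_2^{d}$ where $d = \log_{2} n$ is bounded, and let $f(x) = |E \cap \pi^{-1}(x)|/|H|$ be the coset-density function, so $\sum_{x} f(x) \ge \alpha n$. After fixing a density threshold $\delta = \delta(\alpha,k)$ slightly below $\alpha$, call a coset good if it is $\epsilon$-regular and has density at least $\delta$; then a positive-constant fraction of cosets are good. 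Let $S \subseteq G/H$ be the set of good cosets.

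The core step is a counting lemma: if $C_{0},C_{1},C_{2}$ are three distinct good cosets whose images in $G/H$ sum to $0$, then $M$ has a $k$-element circuit. This is where oddness of $k$ is essential. One picks coset representatives $c_{0},c_{1},c_{2} \in G$ with $c_{0}+c_{1}+c_{2}=0$ — possible precisely because the images sum to $0$ — translates the $C_{i}$-part of $E$ into $H$ to get $\epsilon$-regular, $\delta$-dense sets $B_{i} \subseteq H$, and counts the $k$-tuples consisting of $k-2$ elements of $B_{0}$, one of $B_{1}$, and one of $B_{2}$ that sum to $0$ in $H$. A standard character-sum estimate gives at least $\tfrac12\delta^{k}|H|^{k-1}$ such tuples; because $\dim H = r-d$ is large this dominates both the $O(\epsilon^{k-2}|H|^{k-1})$ regularity error and the $O(|H|^{k-2})$ tuples with a repeated entry or a vanishing proper subsum. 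Reinstating the representatives yields $k$ distinct elements of $E$ summing to $(k-2)c_{0}+c_{1}+c_{2}=c_{0}+c_{1}+c_{2}=0$ — using that $k-2$ is odd — spanning no smaller circuit, hence a $k$-circuit. The bound $k \ge 5$ (so $k-2 \ge 3$) gives room to choose $k-2$ distinct elements of $B_{0}$ and keeps the degenerate terms lower order. (For even $k$ the same configuration fits inside a single coset, and in fact a one-line pigeonhole on the sumset already produces a $k$-circuit in any dense binary matroid of large rank, so the regularity lemma is needed only in the odd case.) Granting the lemma, $S$ contains no three distinct elements summing to $0$, and the zero coset is not in $S$ either — otherwise a $k$-circuit would lie inside the subgroup $H$ — so $S$ is a sum-free subset of $\bF_2^{d}\setminus\{0\}$ and is covered by the $d$ affine hyperplanes $\{x : x_{i}=1\}$. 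Pulling these back through $\pi$ covers all of $E$ except the part over the non-good cosets, a residual set of size at most a small fraction of $2^{r}$.

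It then remains to bound the critical number of this residual, itself a simple binary matroid of large rank with no $k$-circuit, and to use that the critical number is subadditive over unions of the ground set. The hard part, I expect, will be precisely this last step: iterating the argument on the residual — whose density can be forced down by a constant factor at each stage by a suitable choice of parameters — and extracting from the iteration a bound that depends only on $\alpha$ and $k$ and not on $r$; one must also check the counting lemma stays applicable at each level. By contrast, the "a sum-free subset of $\bF_2^{d}$ is covered by $d$ affine hyperplanes" step and the character-sum count are routine.
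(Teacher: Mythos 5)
Your plan goes wrong at the endgame, and the difficulty you flag at the end is not a technical loose end but the whole theorem. First, the covering analysis is off: once you pull back the $d$ coordinate affine hyperplanes $\{x : x_i = 1\}$ from $G/H$, you cover \emph{every} coset of $H$ except $H$ itself, since every nonzero element of $\GF(2)^d$ has a coordinate equal to $1$. So the sum-freeness of $S$ does no work, the three-distinct-good-cosets counting lemma is not the statement you need, and the true residual is exactly $E\cap H$ rather than ``the part over the non-good cosets.'' Second, and more seriously, the proposed fix --- iterate the argument on a residual whose density has been pushed down --- cannot yield a bound depending only on $\alpha$ and $k$: the density hypothesis is essential to the theorem. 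Cycle matroids of graphs with girth greater than $k$ and arbitrarily large chromatic number are sparse, have no $k$-element circuit, and have unbounded critical number, so once the density lower bound is lost there is no true statement to recurse on. As written, the proposal has no mechanism for disposing of the leftover set, and no straightforward iteration can supply one.

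The missing idea, which is how the paper finishes, is to abandon the covering picture and use the subspace formulation of critical number directly, with \emph{no} residual. Take $H$ to be a $\eps$-regular subspace of codimension $s\le c:=W(\eps^{-3})$ (plus a lower bound on $r$). If the critical number exceeded $c$, then in particular $H$ could not be disjoint from $X$, so there is some $x\in X\cap H$ --- no goodness of the zero coset is required. By averaging over the regular cosets there is one coset representative $a$ (with $a=0$ or $a\notin H$) for which $A=H_a(X)$ is $\eps$-uniform with $|A|\ge(\alpha-\eps)|H|$. The counting bound for $\eps$-uniform sets gives many $(k-1)$-tuples of $A$ summing to $x$, of which only $O(|H|^{k-3})$ have a vanishing proper subsum; translating a nondegenerate tuple back by $a$ contributes $(k-1)a=0$ because $k-1$ is even, so together with $x$ it forms a $k$-element circuit of $M$, a contradiction. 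Hence $X\cap H=\emptyset$ and the critical number is at most $s\le c$. Note how this sidesteps both of your problems: a single uniform dense coset plus \emph{any one} element of $X\cap H$ forces a $k$-circuit, so nothing ever needs to be said about the bad cosets, and no iteration is needed.
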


The restriction to excluding {\em odd} circuits from a {\em binary}
matroid here is natural. The geometric density Hales-Jewett
theorem [\ref{fk}] implies that dense 
$\GF(q)$-representable matroids with sufficiently large rank
necessarily contain
arbitrarily large affine geometries over $\GF(q)$, which contain
all even circuits when $q = 2$ and all circuits when $q > 2$.
So dense $k$-circuit free $\GF(q)$-representable matroids of
large rank only exist when $q = 2$ and $k$ is odd. 

Our main theorem (Theorem~\ref{maintech}) is somewhat more
general than Theorem~\ref{mainsimple};
it bounds the critical number of any sufficiently dense binary matroid
whose elements are each contained in at most
$o(2^{(k-2)r})$ circuits of size $k$.
Note that each element of $\PG(r-1,2)$ is contained in at
most $2^{(k-2)r}$ circuits of size $k$, so our result is
best possible up to a constant factor.  We obtain the theorem as an easy
application of Green's regularity lemma for finite
abelian groups~[\ref{green}], which we review in Section 2.

Recall that, if $M$ is a simple rank-$r$ binary matroid
considered as a restriction of the binary
projective geometry $G \cong \PG(r-1,2)$, then the
\emph{critical number}  of $M$ is the minimum
$c \in \bZ_0^+$ such that $G$ has a rank-$(r-c)$
flat disjoint from $E(M)$. Equivalently, the critical number
is the minimum number of ``cocycles" needed to cover $E(M)$,
where by a cocycle we mean a disjoint union of cocircuits.
Thus cocycles correspond to cuts in a graph and, hence,
critical number is a geometric analog of chromatic number.

Theorem~\ref{mainsimple} is analogous to the following theorem
due to Thomassen~[\ref{t07}].

\begin{theorem}\label{thomassen}
For each real number $\alpha >0$ and odd integer $k \ge 5$,
there exists $c \in \bZ$ such that every simple graph on
$n$ vertices with minimum degree at least $\alpha n$ and
no $k$-cycle has chromatic number at most $c$. 
\end{theorem}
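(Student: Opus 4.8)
The goal is to bound $\chi(G)$ when $G$ is a simple graph on $n$ vertices with $\delta(G)\ge\alpha n$ and no $k$-cycle, $k\ge 5$ odd; since $\chi(G)\le n$ this is trivial for bounded $n$, so assume $n$ large. I would begin with two standard reductions. Each component of $G$ has more than $\alpha n$ vertices and inherits minimum degree at least $\alpha$ times its own order, so we may assume $G$ is connected; and an induction along the block tree (a block containing a vertex of $G$ that is not a cut vertex keeps that vertex's full degree, and $\chi(G)$ is the maximum of $\chi$ over blocks) lets us reduce, at the cost of shrinking $\alpha$ slightly and setting aside a bounded set of low-degree vertices, to $G$ being $2$-connected with $\delta(G)\ge\alpha' n$ for some $\alpha'>0$ depending only on $\alpha$.

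Step one: $G$ is bipartite or has bounded odd girth. If $G$ is bipartite then $\chi(G)\le 2$; otherwise let $C=v_1v_2\cdots v_g$ be a shortest odd cycle. Then $C$ is induced, and for $g\ge 5$ a short case analysis shows that a vertex outside $C$ cannot have two neighbours on $C$ at cyclic distance in $\{1\}\cup\{3,\dots,g-3\}$, nor three pairwise-close neighbours on $C$ (each such configuration yields a triangle or a shorter odd cycle); hence every vertex outside $C$ has at most two neighbours on $C$. Counting the edges between $C$ and its complement two ways gives $g(\alpha' n-2)\le 2n$, so $g\le 2/\alpha'+O(1)=:g_0$, a constant. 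Thus if $G$ is non-bipartite it contains an odd cycle of bounded length.

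Step two: exhibit the forbidden cycle when $\alpha$ is not too small. In a $2$-connected non-bipartite graph with $\delta\ge\alpha' n$ and $n$ large, between the two ends of any given edge there are paths of every length $\ell$ with $\ell_0(\alpha')\le\ell\le c_1\alpha' n$ for suitable constants $\ell_0,c_1$ (produce one such path, then lengthen and shorten it locally using the density); adjoining a path of length $k-1$ to its edge yields a $C_k$. Hence if $\ell_0(\alpha')\le k-1$ — i.e. if $\alpha'$ is bounded below in terms of $k$ — the hypothesis forces $G$ bipartite and $\chi(G)\le 2$. This settles every case except the regime where $\alpha$ is small relative to $k$, in which $G$ may genuinely be $C_k$-free.

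In that remaining regime one must bound $\chi(G)$ structurally, and this is the crux and the step I expect to be hardest. The plan is to show a dense $C_k$-free graph is ``homomorphically small''. Passing to a Szemer\'edi regular partition of $G$ with parameter $\eps\ll\alpha',1/k$, the reduced graph $R$ on the dense regular pairs inherits $\delta(R)\ge(\alpha'-o(1))|R|$; if $R$ contained an odd cycle of length at most $k$ one could blow it up, padding inside a single regular pair, to a $C_k$ in $G$, so $R$ has odd girth greater than $k$, whence by Step one applied to $R$ either $R$ is bipartite or its odd girth lies in a bounded window above $k$. Either way $G$ is ``nearly bipartite''; the difficulty is to upgrade this to a homomorphism of $G$ into a graph of bounded order — equivalently, to rule out a sparse but highly chromatic remainder by showing the non-bipartite behaviour is confined to boundedly many vertices, or that neighbourhoods fall into boundedly many types. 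This last step is precisely where the specific forbidden length $k$ (not merely the odd girth) must be exploited, and it is the analogue, on the graph side, of the application of Green's regularity lemma that drives Theorem~\ref{maintech}.
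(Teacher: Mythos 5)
First, note that the paper does not prove this statement at all: Theorem~\ref{thomassen} is quoted as background and attributed to Thomassen [6], whose argument is a short, elementary graph-theoretic one (no regularity machinery); it is stated only as the graph analogue motivating Theorem~\ref{mainsimple}. So there is no in-paper proof to match, and your proposal must stand on its own.

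It does not. The decisive gap is the one you yourself flag: the only regime in which the theorem has content is when $\alpha$ is small relative to $k$ (e.g.\ balanced blow-ups of an odd cycle of length $m>k$ are $C_k$-free, non-bipartite, and have minimum degree about $2n/m$), and for that regime you offer only a plan, not a proof. Your Steps one and two dispose of the easy cases (bounded odd girth; $\alpha$ large forces bipartiteness), but the regularity sketch for the main case stops exactly where the work begins. Knowing that the reduced graph $R$ has odd girth $>k$ (hence, by your Step one, is bipartite or has odd girth in a bounded window) does not bound $\chi(G)$: a Szemer\'edi partition leaves the exceptional set, the low-density pairs, and the edges inside classes completely uncontrolled, and these can in principle carry a subgraph of unbounded chromatic number. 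Ruling that out — i.e.\ converting ``near-bipartite in the reduced sense'' into a homomorphism of all of $G$ into a bounded graph, using the specific forbidden length $k$ — is the entire difficulty, and you explicitly leave it open. A proof attempt that defers its crux to ``the step I expect to be hardest'' is a program, not a proof.

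Two smaller points. The block-tree reduction is stated too loosely: internal blocks need not inherit linear minimum degree (a cut vertex may have most of its neighbours outside the block), so the claim ``$\chi$ is the max over blocks, each with $\delta\ge\alpha'|B|$'' needs repair (one can patch it by discarding the boundedly many cut vertices with many external neighbours, at a bounded cost in colours, but then $2$-connectivity is lost). And the assertion in Step two that between the ends of \emph{any} edge of a dense $2$-connected non-bipartite graph there are paths of \emph{every} length in a long interval is a genuinely nontrivial statement about cycle/path spectra that you neither prove nor reference; as written it is an unsupported lemma, though it only affects the easy regime.
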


Theorem~\ref{thomassen} does not extend to the case that $k=3$;
for each $\eps>0$,
Hajnal (see~[\ref{erdossimonovits}]) gave examples
of triangle-free graphs $G$ with minimum degree at least
$(\frac 1 3 -\eps) |V(G)|$
and with  arbitrarily large chromatic number.
Nevertheless, we conjecture that
Theorem~\ref{mainsimple} also holds for $k = 3$.
That is:

\begin{conjecture}\label{triangleconj}
For each real number $\alpha>0$ there exists $c \in \bZ$ such that,
if $M$ is a  simple triangle-free binary matroid
with $|M| \ge \alpha 2^{r(M)}$, then $M$ has critical number at most $c$.
\end{conjecture}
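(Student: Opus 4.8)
The plan is to follow the regularity approach used for Theorem~\ref{mainsimple}. Regard $M$ as a subset $S$ of $\bF_2^{r}\setminus\{0\}$, where $r=r(M)$; then being triangle-free means that no three distinct elements of $S$ sum to $0$, equivalently that $x+y\notin S$ for all distinct $x,y\in S$ (a Schur-triple-free, or sum-free, condition), and having critical number at most $c$ means that some subspace of $\bF_2^{r}$ of codimension at most $c$ is disjoint from $S$. Apply Green's regularity lemma to $S$ with a parameter $\eps$ to be fixed later: it yields a subspace $V\le\bF_2^{r}$ of codimension $d=d(\eps)$ such that $S$ is $\eps$-regular with respect to $V$. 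Pass to the quotient $W=\bF_2^{r}/V\cong\bF_2^{d}$ with projection $\pi$, and record the density function $\delta\colon W\to[0,1]$, $\delta(w)=|S\cap\pi^{-1}(w)|/|V|$; since $|S|\ge\alpha 2^{r}$, the average of $\delta$ over $W$ is at least $\alpha$.

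The key point is that heavy cosets cannot interact. Fix a threshold $\beta>0$, small relative to $\alpha$ but large relative to $\eps$, and let $T=\{w\in W:\delta(w)\ge\beta\text{ and }\pi^{-1}(w)\text{ is a regular coset}\}$. If $w_1+w_2+w_3=0$ in $W$ with $w_1,w_2,w_3\in T$, then the counting estimate of the regularity method produces about $\delta(w_1)\delta(w_2)\delta(w_3)|V|^{2}\ge(\beta^{3}-o(1))|V|^{2}$ solutions of $x+y+z=0$ with $x\in S\cap\pi^{-1}(w_1)$ and so on; choosing $\eps$ small in terms of $\beta$ makes this positive, and one checks, using $0\notin S$, that any such solution is a genuine triangle of $M$, a contradiction. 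Taking $w_1=w_2$ (so $w_3=0$) shows in particular that $0\notin T$, so $T$ contains no Schur triple at all: it is a triangle-free subset of the \emph{bounded-dimensional} space $\bF_2^{d}$. Hence $\pi^{-1}(T)$ is triangle-free, and since $V\cap\pi^{-1}(T)=\emptyset$, the part of $S$ living in heavy cosets, namely $S\cap\pi^{-1}(T)$, already has critical number at most $d$.

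What remains is $S'=S\setminus\pi^{-1}(T)$, the part of $S$ in the light and irregular cosets. This is where $k=3$ differs from $k\ge5$: here $M$ has \emph{no} short odd circuit at all, so there is no removal-lemma slack that would let us simply discard $S'$, and a sparse set can have arbitrarily large critical number. Finishing the proof seems to require showing that $S'$ is itself structured --- essentially a stability version of the extremal bound for sum-free sets in $\bF_2^{n}$, to the effect that a sum-free set of density bounded away from $0$ lies, after passing to a subspace of bounded codimension, inside an affine hyperplane. One can try to extract this directly: since $M$ is triangle-free, $\sum_{\xi}\wh{1_S}(\xi)^{3}=0$ while $\wh{1_S}(0)=|S|\ge\alpha 2^{r}$, so $1_S$ has a nonprincipal Fourier coefficient of magnitude at least $\alpha^{3}2^{r}$; peeling off the corresponding cocircuit and recursing inside a codimension-one subspace reduces the problem, but the density increment gained at each step is only polynomial in the current density, so the recursion need not terminate in boundedly many steps.

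I expect this last point --- controlling the light cosets, equivalently proving that the sparse remainder of a dense sum-free set in $\bF_2^{n}$ has bounded critical number --- to be the main obstacle, and presumably it is the reason the statement is posed as a conjecture. It is worth noting that Theorem~\ref{thomassen} genuinely fails at $k=3$ because of Hajnal's construction, which is locally sparse but globally complex; the rigidity of the subspace lattice of $\PG(r-1,2)$ seems to preclude the analogous configurations, which is the heuristic reason to believe Conjecture~\ref{triangleconj}.
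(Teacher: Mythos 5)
You have not proved the statement, and indeed you could not have been expected to: the statement is Conjecture~\ref{triangleconj}, which the paper explicitly leaves open. The paper offers no proof of it; its only result in this direction is the weaker Theorem~\ref{weaker}, proved by exactly the route you take in your first two paragraphs. Your heavy-coset analysis (apply Theorem~\ref{regularity}, find a dense $\eps$-uniform coset, use the triangle-counting estimate of Lemma~\ref{triangles} to forbid interaction, and conclude in particular that $X\cap H$ must be sparse, i.e.\ $0\notin T$) is essentially a reconstruction of the paper's proof of Theorem~\ref{weaker}, with the mild extra observation that the heavy regular cosets form a sum-free set in the bounded-dimensional quotient. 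The obstacle you then identify --- that at $k=3$ there is no counting slack allowing one to discard the sparse remainder in the light and irregular cosets, and that a sparse set can a priori have unbounded critical number --- is precisely the gap that the regularity method cannot close and the reason the statement is posed as a conjecture rather than folded into Theorem~\ref{maintech}.

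Two points in your fallback Fourier argument deserve correction, since they misdiagnose where it breaks. First, the recursion is not merely slow; it is not well defined. If $S\subseteq\bF_2^r$ is sum-free and you pass to the dense affine coset $H+v$ on which the large nonprincipal Fourier coefficient concentrates, the translated set $(S\cap(H+v))+v\subseteq H$ need not be sum-free: sums of two elements of $H+v$ land in $H$, so sum-freeness of $S$ constrains $S\cap H$, not the coset you have made dense. Second, even granting a valid recursion, an increment of order $\alpha^2$ per step is not the problem --- that would reach density $1/2$ after $O(\alpha^{-2})$ steps, a bound depending on $\alpha$ alone. The real issues are that the hypothesis does not recurse, and that the goal of the conjecture is a subspace of bounded codimension \emph{disjoint} from $S$, which a density-increment endgame does not by itself deliver. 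So your write-up is a sound account of the known partial progress together with an honest identification of the open difficulty, but it is not a proof of the conjecture.
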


Green's regularity lemma gives a weaker outcome:

\begin{theorem}\label{weaker}
For each real number $\eps >0$ there exists $c \in \bZ$ such that,
if $M$ is a triangle-free restriction of 
a binary projective geometry $G\cong \PG(r-1,2)$,
then there is a flat $F$ of $G$ such that
$r(F) \ge r(G)-c$ and $|F\cap E(M)|\le \eps 2^{r(F)}$.
\end{theorem}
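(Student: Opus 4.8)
The plan is to pass to the additive model. Put $r=r(G)$ and identify $G$ with $\bF_2^r\setminus\{0\}$, so that the rank-$s$ flats of $G$ are exactly the sets $V\setminus\{0\}$ for $V$ an $s$-dimensional subspace of $\bF_2^r$; writing $S\subseteq\bF_2^r\setminus\{0\}$ for the set corresponding to $E(M)$, we then have $|E(M)\cap F|=|S\cap V|$ whenever $F$ is the flat associated with $V$. Since $0\notin S$, any three elements of $S$ summing to zero are automatically pairwise distinct, so $M$ being triangle-free is exactly the statement that $S$ contains no three elements summing to zero --- a property inherited by $S\cap V$ for every subspace $V$. It therefore suffices to prove a density-decrement statement and iterate it: \emph{if $S\subseteq\bF_2^s\setminus\{0\}$ contains no three elements summing to zero and $\rho=|S|/2^s$, then some hyperplane $H$ of $\bF_2^s$ satisfies $|S\cap H|\le(\rho-\rho^2)\,2^{s-1}$.}

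For the decrement I would run the standard linear Fourier computation. With $\wh{1_S}(t)=2^{-s}\sum_x 1_S(x)(-1)^{\langle x,t\rangle}$, the number of ordered triples $(x,y,z)\in S^3$ with $x+y+z=0$ equals $2^{2s}\sum_t\wh{1_S}(t)^3$; as every such triple has distinct entries, this number is $6$ times the number of triangles of $M$, hence $0$, and isolating the $t=0$ term gives $\sum_{t\ne 0}\wh{1_S}(t)^3=-\rho^3$. Discarding the nonnegative summands on the left only decreases it, so $\sum_{t:\,\wh{1_S}(t)<0}\wh{1_S}(t)^3\le-\rho^3$; together with $\sum_{t\ne 0}\wh{1_S}(t)^2\le\rho$ (Parseval) this forces some $t_0\ne 0$ with $\wh{1_S}(t_0)\le-\rho^2$. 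Taking $H=\{x:\langle x,t_0\rangle=0\}$, a subspace of dimension $s-1$, the identity $|S\cap H|=2^{s-1}\bigl(\rho+\wh{1_S}(t_0)\bigr)$ yields $|S\cap H|\le(\rho-\rho^2)\,2^{s-1}$. It matters here that the sparse side is the subspace $H$ rather than its affine complement, which is why one needs a large \emph{negative} coefficient $t_0$; this is precisely what the sign of $\sum_{t\ne 0}\wh{1_S}(t)^3$ provides.

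To finish, iterate from $F_0=G$: applying the decrement to the triangle-free matroid $M|F_i$ gives a flat $F_{i+1}$ of $G$ with $r(F_{i+1})=r(F_i)-1$ and $\rho_{i+1}\le\rho_i(1-\rho_i)$ (and if some $M|F_i$ is empty the iteration stops, with nothing to prove). Since $\rho_{i+1}\le\rho_i(1-\rho_i)$ forces $1/\rho_{i+1}\ge 1/\rho_i+1$, we get $\rho_i\le 1/i$ for all $i\ge 1$, so with $c=\lceil 1/\eps\rceil$ and $F=F_c$ we obtain $r(F)\ge r(G)-c$ and $|E(M)\cap F|\le\eps\,2^{r(F)}$.

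The proof is short because it needs only linear Fourier analysis, and I do not expect a real obstacle inside it; the genuine difficulty is that the conclusion cannot be strengthened this way. The decrement degrades like $\rho-\rho^2$ as $\rho\to 0$, so the rank loss is $\Theta(1/\eps)$ rather than an absolute constant, and ``sparse inside a flat'' says nothing about the critical number --- Hajnal-type configurations would not be excluded, and ruling them out for $k=3$ is precisely Conjecture~\ref{triangleconj}. One could alternatively read the decrement off Green's regularity lemma, since a dense $U^2$-quasirandom piece of $S$ inside a bounded-codimension subspace would contain three elements summing to zero; but that route requires feeding the lemma a regularity parameter that beats the codimension it returns (its growth-function form), plus some care about landing on a regular coset, whereas the Fourier argument above avoids all of that.
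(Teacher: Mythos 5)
Your proof is correct, but it takes a genuinely different route from the paper. The paper deduces Theorem~\ref{weaker} from Green's regularity lemma together with the counting estimate of Lemma~\ref{triangles}: it picks $\delta$ with $\eps(\eps-\delta)^2>\delta>0$, takes a $\delta$-regular subspace $H$ of codimension at most $c=W(\delta^{-3})$, finds a uniform dense translate $A=H_a(X)$ as in the first claim of Theorem~\ref{maintech}, and observes that if $|X\cap H|>\eps|H|$ then $T(A,A,X\cap H)\neq\emptyset$, producing a triangle; so the flat produced is the regular subspace itself. You instead run a self-contained linear Fourier density-decrement: your computation that triangle-freeness (using $0\notin S$ to rule out repeated entries) forces $\sum_{t\neq0}\wh{1_S}(t)^3=-\rho^3$, hence via Parseval a coefficient $\wh{1_S}(t_0)\le-\rho^2$, is right, and you correctly identify the one delicate point — the coefficient must be \emph{negative} so that the sparse side is the subspace $H$ (a flat of $G$) rather than its affine complement (not a flat). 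The iteration $\rho_{i+1}\le\rho_i(1-\rho_i)$, with the harmless degenerate cases (empty restriction, rank running out) handled as you indicate, gives $c=\lceil1/\eps\rceil$. What each approach buys: the paper's argument is essentially free given the regularity machinery already deployed for Theorem~\ref{maintech}, and fits the paper's theme, but yields a tower-type bound $c=W(\delta^{-3})$ with $\delta\approx\eps^3$; your argument avoids the regularity lemma entirely and gives a bound on $c$ that is linear in $1/\eps$, which is substantially stronger quantitatively — your closing remarks about why this cannot be pushed to bounded critical number (Conjecture~\ref{triangleconj}) are also apt.
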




\section{Regularity}

We will largely use the standard notation of matroid
theory [\ref{oxley}], but it will also be
convenient to think of a rank-$r$ binary matroid as a
subset of the vector space $V = \GF(2)^r$. This change is
purely notational; if $X \subseteq V$ then we write $M(X)$ for
the binary matroid on $X$ represented by a binary matrix with
column set $X$. If $0 \notin X$ then $M(X)$ is simple.
We define the {\em critical number} of $X$ to be the
critical number of $M(X)$; that is, the minimum codimension of a
subspace of $V$ disjoint from $X$.

Green used Fourier-analytic techniques to prove his regularity
lemma for abelian groups and to derive applications
in additive combinatorics; these techniques are discussed in greater 
detail in the book of Tao and Vu~[\ref{tv06}, Chapter 4]. 
Fortunately, although this theory has many technicalities,
the group $\GF(2)^n$ is among its simplest applications. 

Let $V = \GF(2)^r$ and let $X\subseteq V$.
Note that, if $H$ is a codimension-$1$ subspace of $V$,
then $|H| = |V \del H|$.
We say that $X$ is {\em $\eps$-uniform} if
for each codimension-$1$ subspace $H$ of $V$ we have
$$ | \,|H\cap X| - |X\del H|\, | \le \eps |V|.$$
In Lemma~\ref{sumlemma} we will see that,
for small $\eps$, the $\eps$-uniform sets are `pseudorandom'.

Let $H$ be a subspace of $V$.
For each $v \in V$, let $H_v(X) = \{h \in H: h + v \in X\}$.
For $\eps > 0$, we say $H$ is \emph{$\eps$-regular}
with respect to $V$ and $X$ if $H_v(X)$ is $\eps$-uniform
in $H$ for all but $\eps |V|$ values of $v \in V$.

Regularity captures the way that $X$ is distributed among
the cosets of $H$ in $V$.  For $v\in V$,
we let $X+v = \{x+v\, : \, x\in X\}$;
thus $X+v$ is a translation of $X$. Note that
$X+v$ is $\eps$-uniform if and only if $X$ is.
Also note that $H_v(X) + v = X\cap H'$ where $H'=H+v$
is the coset of $H$ in $V$ that contains $v$.
Therefore, if $u,v\in H'$, then
$H_u(X)$ and $H_v(X)$ are translates of one another.
So $H$ is $\eps$-regular if, for all but an 
$\eps$-fraction of cosets $H'$ of $H$,
the set $(H'\cap X)+v$ is $\eps$-uniform  in $H$ for some $v\in H'$.

The following result of Green [\ref{green}] guarantees a regular
subspace of bounded codimension. Here $W(t)$ denotes an
exponential tower of $2$'s of height $\lceil t \rceil$.

\begin{theorem}[Green's regularity lemma] \label{regularity}
Let $V = \GF(2)^n$, $X\subseteq V$, and let $\eps > 0$ be a real number.
Then there is  a
subspace $H$ of $v$ that is $\eps$-regular with respect to $X$
and $V$ and has codimension at most $W(\eps^{-3})$ in $V$. 
\end{theorem}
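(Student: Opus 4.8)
The plan is to prove Green's regularity lemma by a Fourier-analytic energy-increment argument, parallel to the proof of Szemer\'edi's regularity lemma. Identify the character group $\widehat V$ of $V=\GF(2)^n$ with $V$ itself via the pairing $\langle a,x\rangle$, writing $\chi_a(x)=(-1)^{\langle a,x\rangle}$; for a subspace $H\le V$ set $\Lambda(H)=H^{\perp}=\{a\in V:\chi_a|_H\equiv 1\}$, so that $\mathrm{codim}(H)=\dim\Lambda(H)$, the restriction map $\widehat V\to\widehat H$ is surjective with kernel $\Lambda(H)$, and the characters of $H$ trivial on a subspace $H'\le H$ are precisely the restrictions $\chi_a|_H$ with $a\in\Lambda(H')$. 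Write $f=1_X$, and for a subspace $H$ let $\mathbb{E}[f\mid H]$ be the function on $V$ that is constant on each coset $C$ of $H$ with value equal to the density of $X$ on $C$. Define the \emph{energy} $\mathcal{E}(H)=\|\,\mathbb{E}[f\mid H]\,\|_2^2$, where $\|g\|_2^2=|V|^{-1}\sum_{v\in V}|g(v)|^2$; by Parseval this equals $\sum_{a\in\Lambda(H)}|\widehat f(a)|^2$, so $0\le\mathcal{E}(H)\le\|f\|_2^2\le 1$ and $\mathcal{E}$ is nondecreasing under refinement of $H$.

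The procedure is: start with $H_0=V$; given $H_i$, stop if $H_i$ is $\eps$-regular with respect to $X$ and $V$, and otherwise refine as follows. Let $B$ be the family of cosets $C$ of $H_i$ such that $X\cap C$, translated into $H_i$, fails to be $\eps$-uniform in $H_i$. Since this property depends only on the coset and each bad coset contributes $|H_i|$ bad vectors $v$, non-regularity of $H_i$ gives $|B|>\eps\,|V/H_i|$. For each $C\in B$ fix a nontrivial $\psi_C\in\widehat{H_i}$ witnessing non-uniformity, that is, with $|\widehat{1_{X\cap C}}^{H_i}(\psi_C)|>\eps$ (translating $X\cap C$ into $H_i$ only multiplies Fourier coefficients by phases of modulus $1$, so this is unambiguous), choose a lift $a_C\in\widehat V$ with $\chi_{a_C}|_{H_i}=\psi_C$, and set $H_{i+1}=H_i\cap\bigcap_{C\in B}\ker\chi_{a_C}$. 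Then $\Lambda(H_{i+1})=\Lambda(H_i)+\mathrm{span}\{a_C:C\in B\}$, so $\mathrm{codim}(H_{i+1})\le\mathrm{codim}(H_i)+|B|\le\mathrm{codim}(H_i)+2^{\mathrm{codim}(H_i)}$.

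The key step is the energy increment. Translating each coset $C$ of $H_i$ into $H_i$ and writing $\|\cdot\|$ for the resulting normalized $L^2$-norm on $H_i$, one has $\mathcal{E}(H_{i+1})=|V/H_i|^{-1}\sum_{C}\|\,\mathbb{E}[f\mid H_{i+1}]|_C\,\|^2$. For each $C$ the function $\mathbb{E}[f\mid H_{i+1}]|_C$ is constant on the cosets of $H_{i+1}$, hence its Fourier expansion over $\widehat{H_i}$ is supported on the characters trivial on $H_{i+1}$, and on any such character its coefficient agrees with that of $1_{X\cap C}$ (conditioning onto $H_{i+1}$-cosets preserves exactly those coefficients). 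The trivial character contributes $\delta_C^2$, where $\delta_C$ is the density of $X$ on $C$; and if $C\in B$ then $\psi_C$ is nontrivial and — because $a_C\in\Lambda(H_{i+1})$ — trivial on $H_{i+1}$, so orthogonality gives $\|\,\mathbb{E}[f\mid H_{i+1}]|_C\,\|^2\ge\delta_C^2+|\widehat{1_{X\cap C}}^{H_i}(\psi_C)|^2>\delta_C^2+\eps^2$, whereas $\|\,\mathbb{E}[f\mid H_{i+1}]|_C\,\|^2\ge\delta_C^2$ for every $C$. Since $|V/H_i|^{-1}\sum_C\delta_C^2=\mathcal{E}(H_i)$ and $|B|>\eps\,|V/H_i|$, summing over $C$ yields $\mathcal{E}(H_{i+1})>\mathcal{E}(H_i)+\eps^3$.

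Because $\mathcal{E}$ takes values in $[0,1]$ and increases by more than $\eps^3$ at each step, the procedure halts after $T\le\eps^{-3}$ steps at a subspace $H=H_T$ that is $\eps$-regular by construction. Unwinding the recursion $\mathrm{codim}(H_{i+1})\le\mathrm{codim}(H_i)+2^{\mathrm{codim}(H_i)}$ from $\mathrm{codim}(H_0)=0$ over these $\le\eps^{-3}$ steps gives a codimension that is a tower of $2$'s of height $O(\eps^{-3})$, which (after absorbing the constant into the height, or by slightly more careful bookkeeping) is at most $W(\eps^{-3})$; and if the ambient dimension $n$ is itself at most $W(\eps^{-3})$ one may simply take $H=\{0\}$, which is vacuously $\eps$-regular since it has no codimension-$1$ subspace. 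I expect the energy-increment step to be the crux: the point is to refine by \emph{all} the bad characters $\psi_C$ at once and to measure progress through the conditional-expectation energy $\mathcal{E}$ rather than through individual Fourier coefficients of $1_X$ — a single $\psi_C$ need not give a large coefficient $\widehat{1_X}(a_C)$ on $V$, since contributions from different cosets can cancel, but it does add $\eps^2$ to the $L^2$-mass of $\mathbb{E}[f\mid H_{i+1}]$ on the coset $C$, and these local gains accumulate to $\eps^3$ globally.
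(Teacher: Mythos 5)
The paper does not prove Theorem~\ref{regularity} at all --- it is imported from Green~[\ref{green}] --- so your argument can only be measured against the known proof, and it essentially \emph{is} that proof: the Fourier-analytic energy-increment iteration you describe is Green's argument specialized to $\GF(2)^n$. The individual steps are sound: $\eps$-uniformity of a subset of $H_i$ is equivalent to every nontrivial Fourier coefficient over $H_i$ having modulus at most $\eps$ (the codimension-$1$ subspace corresponds to the character whose kernel it is); badness is a property of the coset, so failure of $\eps$-regularity forces more than $\eps|V/H_i|$ bad cosets; conditioning onto $H_{i+1}$-cosets preserves exactly the coefficients at characters trivial on $H_{i+1}$; and Parseval then gives the local gain of $\eps^2$ on each bad coset, hence the global gain of more than $\eps^3$ per refinement, so the process stops within $\eps^{-3}$ steps. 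The only point you leave vague is whether the codimension recursion really lands inside $W(\eps^{-3})$ rather than a tower of height $O(\eps^{-3})$, and this does close with no loss in the height: the number of refinement steps $T$ satisfies $T\eps^3<1$, so $T\le\lceil\eps^{-3}\rceil$, and the recursion $d_{i+1}\le d_i+2^{d_i}$ with $d_0=0$ gives $d_1\le 1=W(1)-1$ and, inductively, $d_{i+1}\le \left(W(i)-1\right)+2^{W(i)-1}=W(i)-1+\tfrac12 W(i+1)\le W(i+1)-1$ since $2W(i)\le 2^{W(i)}$; hence $\mathrm{codim}(H_T)\le W(T)\le W(\eps^{-3})$ as stated (your fallback remark about taking $H=\{0\}$ is not needed). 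Modulo writing out these routine estimates, the proposal is a correct proof and coincides in approach with the cited source.
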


Let $A\subseteq V$ with $|A| =\alpha |V|$.
For $x \in V$ and $k \in \bZ$, we let
$S(A,k;x)$ denote the set of $k$-tuples in $A^k$ with sum
equal to $x$. Clearly $|S(A,k;x)| \le \alpha^{k-1}|V|^{k-1}$. If $A$ were
a random subset of $V$, we would expect around a
$|V|^{-1}$-fraction of the tuples in $A^k$ to
sum to $x$, which would give
$|S(A,k;x)| \approx \alpha^{k} |V|^{k-1}$; the next
lemma, a corollary of [\ref{tv06}, Lemma 4.13], bounds the error
in such an estimate when $A$ is uniform.
	
\begin{lemma}\label{sumlemma}
Let $V = \GF(2)^n$, let $x\in V$, and let
$A\subseteq V$ with $|A|=\alpha |V|$.
For each integer $k\ge 3$ and real $\eps>0$,
if $A$ is $\eps$-uniform, then
\[ 
\left|S(A,k;x) \right| \ge
 (\alpha^{k} - \eps^{k-2}) |V|^{k-1}. \]
\end{lemma}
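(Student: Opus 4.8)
The plan is to prove the lemma by Fourier analysis on the group $V = \GF(2)^n$, following the proof of [\ref{tv06}, Lemma 4.13]. For $\gamma \in V$ let $\chi_\gamma\colon V \to \{-1,1\}$ be the additive character $\chi_\gamma(v) = (-1)^{\ip{\gamma}{v}}$, and for $f\colon V \to \bR$ define $\wh{f}(\gamma) = \sum_{v\in V} f(v)\chi_\gamma(v)$. We apply this with $f = 1_A$, the indicator function of $A$; note $\wh{f}(0) = |A| = \alpha|V|$ and $\wh f(\gamma) \in \bR$ for all $\gamma$.

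First I would write $|S(A,k;x)|$ as a Fourier sum. Using the identity $\frac{1}{|V|}\sum_{\gamma\in V}\chi_\gamma(y) = 1$ if $y = 0$ and $0$ otherwise, expanding the product, and recalling that $-x = x$ in $V$, one obtains
\[
|S(A,k;x)| \;=\; \sum_{a_1 + \cdots + a_k = x}\ \prod_{i=1}^k 1_A(a_i)
\;=\; \frac{1}{|V|}\sum_{\gamma \in V}\chi_\gamma(x)\,\wh{f}(\gamma)^k .
\]
The term $\gamma = 0$ contributes exactly $\frac{1}{|V|}\wh{f}(0)^k = \alpha^k|V|^{k-1}$, which is the required main term, so it remains only to show that the remaining terms contribute at least $-\eps^{k-2}|V|^{k-1}$; since the $\wh f(\gamma)$ are real, it suffices to bound $\frac{1}{|V|}\sum_{\gamma\ne 0}|\wh f(\gamma)|^k$ from above by $\eps^{k-2}|V|^{k-1}$.

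The key estimate combines two standard facts. First, $\eps$-uniformity bounds the nonzero Fourier coefficients: for $\gamma\ne 0$ the set $H = \{v \in V: \ip{\gamma}{v} = 0\}$ is a codimension-$1$ subspace of $V$ with complement $V\del H = \{v : \ip{\gamma}{v}=1\}$, so $\wh f(\gamma) = |A\cap H| - |A\del H|$, and hence $|\wh f(\gamma)| \le \eps|V|$ by hypothesis. Second, Parseval's identity gives $\sum_{\gamma\in V}|\wh f(\gamma)|^2 = |V|\sum_{v\in V}1_A(v)^2 = |V|\,|A| = \alpha|V|^2$. Since $\alpha \le 1$, these combine to
\[
\frac{1}{|V|}\sum_{\gamma\ne 0}|\wh f(\gamma)|^k \;\le\; \frac{1}{|V|}\Bigl(\max_{\gamma\ne 0}|\wh f(\gamma)|\Bigr)^{k-2}\sum_{\gamma\ne 0}|\wh f(\gamma)|^2 \;\le\; \frac{1}{|V|}\,(\eps|V|)^{k-2}\,|V|^2 \;=\; \eps^{k-2}|V|^{k-1},
\]
as needed. (The hypothesis $k\ge 3$ is used exactly here, to ensure the exponent $k-2$ on the quantity $\max_{\gamma\ne 0}|\wh f(\gamma)|/|V| \le \eps$ is positive.)

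The argument is routine once the Fourier setup is fixed; the only points needing care are the normalization conventions — so that the $\gamma = 0$ term comes out as exactly $\alpha^k|V|^{k-1}$ and Parseval carries the correct power of $|V|$ — and the identification, for $\gamma \ne 0$, of $\wh f(\gamma)$ with the signed discrepancy $|A\cap H| - |A\del H|$ from the definition of $\eps$-uniformity, which is the single place the hypothesis actually enters. I do not foresee a genuine obstacle.
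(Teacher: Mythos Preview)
Your proof is correct: the Fourier expansion, the identification of $\wh f(\gamma)$ with $|A\cap H|-|A\del H|$ for $\gamma\ne 0$, and the Parseval-plus-$\ell^\infty$ bound all go through exactly as you wrote. The paper itself gives no proof of this lemma, simply citing it as a corollary of [\ref{tv06}, Lemma~4.13]; your argument is precisely the standard Fourier-analytic computation underlying that citation, so you are supplying the omitted details rather than taking a different route.
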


Observe that, if $x\in A$ and $\{x,a_1,\ldots,a_{k-1}\}$
is a $k$-element circuit in $M(A)$ that contains $x$,
then $(a_1,\ldots,a_{k-1})\in S(A,k-1;x)$.
However the converse need not be true; if
$(a_1,\ldots,a_{k-1})\in S(A,k-1;x)$ then
$\{x,a_1,\ldots,a_{k-1}\}$ is a $k$-element circuit unless
some proper sub-tuple of $(a_1,\ldots,a_{k-1})$ 
sums to zero.  We let $S_0(A,k;x)$ denote
the set of $k$-tuples in $S(A,k;x)$ having some proper nonempty
sub-tuple with sum $0$. We argue that $S_0(A,k;x)$ is small. 

\begin{lemma}\label{degenerate}
Let $V = \GF(2)^n$, let $k$ be an integer, let $x\in V$, and let
$A\subseteq V$. Then
$|S_0(A,k;x)| \le 2^k|A|^{k-2}$. 
\end{lemma}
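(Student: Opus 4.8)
The plan is to prove the bound by a straightforward union bound over the possible ``witnessing'' sub-tuples, combined with a counting argument that exploits the fact that we are working over $\GF(2)$ with a prescribed total sum.

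\smallskip

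First I would reformulate: a $k$-tuple $(a_1,\dots,a_k)\in A^k$ lies in $S_0(A,k;x)$ precisely when $a_1+\cdots+a_k = x$ and there is a set $I$ with $\emptyset \ne I \subsetneq \{1,\dots,k\}$ and $\sum_{i\in I} a_i = 0$. Since there are fewer than $2^k$ such index sets $I$, it suffices (by the union bound, accepting that tuples with several witnesses are overcounted) to show that for each fixed proper nonempty $I$, the number of tuples in $S(A,k;x)$ with $\sum_{i\in I}a_i = 0$ is at most $|A|^{k-2}$.

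\smallskip

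Now fix such an $I$, say $|I| = j$ with $1 \le j \le k-1$. For the coordinates indexed by $I$: any choice of $j-1$ of the entries from $A$ determines the remaining entry (it must equal the sum of the others, since we are in $\GF(2)$), so there are at most $|A|^{j-1}$ admissible choices for $(a_i)_{i\in I}$. Crucially, once $\sum_{i\in I} a_i = 0$, the condition $\sum_{i=1}^k a_i = x$ becomes $\sum_{i\notin I} a_i = x$; so for the $k-j$ coordinates outside $I$, again $k-j-1$ of the entries may be chosen freely from $A$ and the last is forced, giving at most $|A|^{k-j-1}$ choices. Here $j \ge 1$ and $k-j\ge 1$ ensure both exponents are nonnegative. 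Multiplying, the count for a fixed $I$ is at most $|A|^{j-1}\cdot|A|^{k-j-1} = |A|^{k-2}$, independent of $j$, and summing over the fewer than $2^k$ choices of $I$ yields $|S_0(A,k;x)| \le 2^k|A|^{k-2}$.

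\smallskip

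There is no serious obstacle here; the only point requiring a moment's care is that one must use the global constraint $\sum a_i = x$ when counting the coordinates \emph{outside} $I$ — this is what saves a factor of $|A|$ and brings the bound down from the naive $|A|^{k-1}$ to $|A|^{k-2}$. One should also double-check the boundary cases $j=1$ and $j=k-1$, where one of the two factors degenerates to the trivial bound $|A|^0 = 1$ (which may overcount, e.g. when $0\notin A$, but remains a valid upper bound).
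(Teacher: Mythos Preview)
Your proof is correct and follows essentially the same approach as the paper: a union bound over the proper nonempty index sets $I$, using the observation that the constraints $\sum_{i\in I}a_i=0$ and $\sum_{i\notin I}a_i=x$ each save one degree of freedom, so each $I$ contributes at most $|A|^{|I|-1}\cdot|A|^{k-|I|-1}=|A|^{k-2}$. The paper just phrases the per-$I$ count as $|S(A,i;0)|\cdot|S(A,k-i;x)|\le |A|^{i-1}|A|^{k-i-1}$ and writes the sum over $I$ with binomial coefficients, but the argument is the same.
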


\begin{proof}
If some subtuple has sum $0$ then its complementary tuple
has sum $x$. Summing over all possible nonempty sub-tuples,
we have
\begin{eqnarray*}
|S_0(A,k;x)| &\le& \sum_{i = 1}^{k-1} \binom{k}{i} |S(A,i;0)||S(A,k-i;x)|\\
&\le& \sum_{i = 1}^{k-1} \binom{k}{i} |A|^{i-1}|A|^{k-i-1} \\
&\le& 2^k|A|^{k-2}. 
\end{eqnarray*}
\end{proof}

\section{The Main Result}

\begin{theorem}\label{maintech}
For every real number $\alpha >0$ and odd integer $k\ge 5$,
there exists a real number $\beta>0$ and integer  $c$ such
that, if $M$ is a simple binary matroid with
$|M| \ge \alpha 2^{r(M)}$, then either
$M$ has critical number at most $c$, or some element 
of $M$ is contained in at least $\beta 2^{(k-2)r(M)}$
distinct $k$-element circuits of $M$. 
\end{theorem}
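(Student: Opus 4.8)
The plan is to apply Green's regularity lemma (Theorem~\ref{regularity}) to $X = E(M) \subseteq V = \GF(2)^{r(M)}$ with a small parameter $\eps$ to be chosen in terms of $\alpha$ and $k$, obtaining an $\eps$-regular subspace $H$ of codimension at most $W(\eps^{-3}) =: c$. If $M$ already has critical number at most $c$ we are done, so assume otherwise; I will use this to produce an element lying in many $k$-circuits. Since $X$ has density $\alpha$ in $V$, an averaging argument over the cosets of $H$ shows that some coset $H'$ of $H$ contains at least $\alpha|H|$ points of $X$; combining this with $\eps$-regularity (taking $\eps < \alpha$ so that the bad cosets, which carry at most an $\eps$-fraction of $V$, cannot absorb all the mass while still leaving a coset of density close to $\alpha$), I can find a coset $H'$ that is both \emph{dense}, $|H' \cap X| \ge \alpha'|H|$ for some $\alpha' > 0$ depending only on $\alpha$, and \emph{regular}, i.e. $(H' \cap X) + v$ is $\eps$-uniform in $H$ for some $v \in H'$. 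Actually I want the element whose circuits I count to lie in $X$; so I will instead argue that some dense, regular coset $H'$ meets $X$, pick $x \in H' \cap X$, and translate by $x$: set $A := (X \cap H') + x \subseteq H$, which is $\eps$-uniform in $H$, contains $0$, and has $|A| = |X \cap H'| \ge \alpha' |H|$.

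Now I count $k$-element circuits of $M$ through $x$. By the observation preceding Lemma~\ref{degenerate}, the number of ordered $(k-1)$-tuples $(a_1, \dots, a_{k-1})$ of distinct elements of $X \cap H'$, other than $x$, summing to $x$ and with no proper nonempty subtuple summing to $0$, is exactly (up to the $(k-1)!$ ordering factor and a negligible correction for tuples that repeat an element or include $x$) the number of $k$-circuits through $x$ inside $H'$. Translating by $x$, tuples in $(X \cap H')^{k-1}$ summing to $x$ correspond bijectively to tuples in $A^{k-1}$ summing to $0$, i.e.\ to $S(A, k-1; 0)$. Since $k$ is odd, $k - 1 \ge 4 \ge 3$, so Lemma~\ref{sumlemma} applies in the group $H$ (of size $|H| = 2^{r(M)-c'}$ where $c' = \operatorname{codim} H \le c$) and gives
\[
|S(A, k-1; 0)| \ge \bigl( (\alpha')^{k-1} - \eps^{k-3} \bigr)|H|^{k-2}.
\]
Choosing $\eps$ small enough that $\eps^{k-3} \le \tfrac{1}{2}(\alpha')^{k-1}$ (possible since $k - 3 \ge 2 > 0$ for $k \ge 5$), this lower bound is at least $\tfrac{1}{2}(\alpha')^{k-1}|H|^{k-2}$. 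Then I subtract the degenerate tuples: by Lemma~\ref{degenerate} there are at most $2^{k-1}|A|^{k-3} \le 2^{k-1}|H|^{k-3}$ tuples in $S(A,k-1;0)$ with a proper subtuple summing to $0$, and at most $O(|H|^{k-3})$ further tuples that either repeat a coordinate or contain $0$ (hence correspond to $x$ itself) — all of this is lower-order in $|H|$. So for $|H|$ large — equivalently $r(M)$ large, which I may assume since for bounded $r(M)$ the critical number is trivially bounded by $r(M) \le c$ after enlarging $c$ — the number of valid tuples is at least $\tfrac{1}{3}(\alpha')^{k-1}|H|^{k-2}$. Dividing by $(k-1)!$ to pass from ordered tuples to circuits, and writing $|H|^{k-2} = 2^{(k-2)(r(M)-c')} \ge 2^{-(k-2)c}\, 2^{(k-2)r(M)}$, I obtain at least $\beta\, 2^{(k-2)r(M)}$ distinct $k$-circuits through $x$, where $\beta := \tfrac{(\alpha')^{k-1}}{3\,(k-1)!\,2^{(k-2)c}} > 0$ depends only on $\alpha$ and $k$. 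This is the desired conclusion.

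The main obstacle, and the point needing the most care, is the first step: extracting a single coset of $H$ that is simultaneously dense and regular, and reducing the circuit count to a sum-count \emph{within that coset} rather than across all of $V$. One must check that restricting to $H'$ does not lose too much density — this is where choosing $\eps$ a fixed fraction of $\alpha$ (rather than just "small") matters, so that throwing away the $\eps$-fraction of cosets that fail uniformity still leaves a coset of density bounded below by some $\alpha' = \alpha'(\alpha) > 0$ — and that the bound "critical number $> c$" is genuinely not needed for the counting argument at all; indeed it is only the escape hatch when the regular subspace $H$ itself happens to miss $E(M)$ in too structured a way. (In fact, if $\operatorname{codim} H \le c$ always, the dichotomy in the statement is automatically the first alternative unless we actively use density; the role of "critical number $\le c$" is simply to be the trivial output, so the real content is the counting.) The remaining steps — invoking Lemmas~\ref{sumlemma} and~\ref{degenerate}, and absorbing ordering factors and degenerate tuples into constants — are routine once $\eps$ is pinned down as a function of $\alpha$ and $k$ at the outset.
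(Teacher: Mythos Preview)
Your argument has a genuine gap at the translation step, and it stems from a parity error. You pick $x \in H' \cap X$ in the dense coset $H'$ and claim that tuples in $(X \cap H')^{k-1}$ summing to $x$ correspond, after translating each entry by $x$, to tuples in $A^{k-1}$ summing to $0$. But $k-1$ is \emph{even} (this is where the hypothesis that $k$ is odd actually matters, not merely to ensure $k-1\ge 3$), so $\sum_{i=1}^{k-1}(y_i + x) = \sum y_i + (k-1)x = \sum y_i$; the translation does not change the sum at all. Worse, a sum of $k-1$ elements of the coset $H'$ always lies in $H$, so if $H' \ne H$ there are \emph{no} $(k-1)$-tuples in $(X\cap H')^{k-1}$ summing to $x \in H'$, and your count is zero. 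Your scheme therefore works only in the lucky event that the dense regular coset is $H$ itself.

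This is precisely where the paper uses the assumption that the critical number exceeds $c$, which you dismissed as a ``trivial escape hatch''. Since $H$ has codimension at most $c$, critical number greater than $c$ forces $H \cap X \ne \emptyset$, and the paper takes $x \in H \cap X$ from the \emph{subspace}, not from the dense coset. The dense uniform set $A = H_a(X) \subseteq H$ may come from a different coset $H+a$; one then counts tuples in $A^{k-1} \subseteq H^{k-1}$ summing to $x$ (legitimate now that $x \in H$), and the associated circuit is $\{w_1+a, \dots, w_{k-1}+a, x\} \subseteq X$. The $(k-1)$ shifts by $a$ cancel because $k-1$ is even, so this set sums to zero. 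Thus the circuit straddles the cosets $H$ and $H+a$, and the oddness of $k$ is exactly what makes this cross-coset construction work.
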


\begin{proof}
Let $\alpha>0$ be real and let $k\ge 5$ be an odd integer.
Choose $\eps >0$ so that
\[(\alpha - \eps)^{k-1} - \eps^{k-3}  > 0,\]
let $\alpha_0 = \alpha - \eps$, and then choose $r_0\in\bZ$ so that
\[\alpha_0^{k-1} - \eps^{k-3} - 2^{k-1+W(\eps^{-3}) - r_0} > 0.\]
Let $s_0 = W(\eps^{-3})$, let $c = \max(r_0,s_0)$ and let 
\[ \beta = \frac{2^{(2-k)s_0}}{(k-1)!}\left(\alpha_0^{k-1}
-\eps^{k-3} - 2^{k-1+s_0-r_0}\right). \]
By our choice of $r_0$, we have $\beta > 0$.

Let $M$ be a simple rank-$r$ binary matroid with
$|M| \ge \alpha 2^r$.
Let $V=\GF(2)^r$ and let $X\subseteq V$ such that $M \cong M(X)$.
By Green's regularity lemma,
there is an $\eps$-regular subspace $H$ of $V$ with
codimension $s\le c$.

\begin{claim}
There is some $a \in V$ such that $H_a(X)$ is $\eps$-uniform in
$H$ and satisfies $|H_a(X)| \ge \alpha_0|H|$. 
\end{claim}

\begin{proof}[Proof of claim:]
Let $V_0$ be the set of $v \in V$ for which
$H_v(X)$ is not $\eps$-uniform; we have $|V_0| \le \eps |V|$
by regularity. In summing $|H_v(X)|$ over all $v \in V$,
we count each $x \in X$ with multiplicity $|H|$,
so 
$$\sum_{v \in V}|H_v(X)| = |X||H| \ge \alpha |V| |H|.$$
On the other hand, $\sum_{v \in V_0} |H_v(X)| \le \eps |V| |H|$.
Thus there exists an element $a \in V \del V_0$ with
$$ |H_a(X)| \ge \tfrac{\alpha |V||H| -
\eps |V||H|}{|V \del V_0|} \ge (\alpha - \eps)|H| = \alpha_0|H|,$$  
as required.
\end{proof}

Since $|H_a(X)|$ is constant as $a$ ranges over each coset of $H$,
we may choose $a = 0$ if $a \in H$. Let $A = H_a(X)$. 
We may assume that $M$ has critical number greater than $c$ and, hence,
there exists $x\in H\cap X$.

\begin{claim}\quad
$|S(A,k-1;x) \del S_0(A,k-1;x)| \ge \beta (k-1)!\  2^{(k-2)r}$.
\end{claim}

\begin{proof}[Proof of claim:]
By Lemma~\ref{sumlemma}, we have 
\begin{align*}
|S(A,k-1;x)| 
&\ge \left(\alpha_0^{k-1} - \eps^{k-3}\right) |H|^{k-2}\\
&= \left(\alpha_0^{k-1} - \eps^{k-3}\right)2^{(k-2)(r-s)}.
\end{align*}
By Lemma~\ref{degenerate} we have
\begin{align*}
|S_0(A,k-1;x)| &\le 2^{k-1}|A|^{k-3}\\
& \le 2^{k-1}|H|^{k-3}\\
& =  2^{k-1+s-r}2^{(k-2)(r-s)}
\end{align*}
Combining these and using $r \ge r_0$ and $s \le s_0$, the claim follows. 
\end{proof}

Let $w = (w_1, \dotsc, w_{k-1}) \in S(A,k-1;x) \del S_0(A,k-1;x)$.
The tuple $w' = (w_1 + a, w_2 + a, \dotsc, w_{k-1}+a,x)$
is contained in $X^{k}$, sums to zero, and since no
sub-tuple of $w$ sums to zero, the elements of $w'$ are
distinct and have no sub-tuple summing to zero . (If $a = 0$ this is clear, and otherwise $a \notin H$ so the $w_i + a$ are distinct from $x$.) Therefore $w'$
corresponds to a circuit of $M(X)$ containing $x$. Taking
into account permutations of $w$, it follows that
$x$ is in at least $\beta 2^{(k-2)r}$ distinct
$k$-element circuits of $M(X)$.
\end{proof}

\section{Triangle-free binary matroids}

Finally, to prove Theorem~\ref{weaker}, we need a variation
on Lemma~\ref{sumlemma}, also following from [\ref{tv06}, Lemma 4.13].
Let $V=\GF(2)^r$.  For sets $A_1,A_2,A_3 \subseteq V$,
let $T(A_1,A_2,A_3)$ be the set of triples in 
$A_1 \times A_2 \times A_3$ with sum zero. 

\begin{lemma}\label{triangles}
Let $V \in \GF(2)^n$ and $\eps > 0$.
Let $A_1,A_2,A_3 \subseteq V$ with $|A_i| = \alpha_i |V|$.
If $A_1$ is $\eps$-uniform, then 
\[|T(A_1,A_2,A_3)| \ge (\alpha_1\alpha_2\alpha_3 - \eps)|V|^2.\]
\end{lemma}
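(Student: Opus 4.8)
The plan is to run the short Fourier-analytic computation on $V=\GF(2)^n$ that also underlies Lemma~\ref{sumlemma}, specialised to three (possibly different) sets. For $A\subseteq V$ and $y\in V$, write $\wh{1_A}(y)=\sum_{a\in A}(-1)^{\ip{a}{y}}$. Since $\sum_{y\in V}(-1)^{\ip{z}{y}}$ equals $|V|$ when $z=0$ and $0$ otherwise, expanding the condition $a_1+a_2+a_3=0$ gives
\[ |T(A_1,A_2,A_3)| \;=\; \frac{1}{|V|}\sum_{y\in V}\wh{1_{A_1}}(y)\,\wh{1_{A_2}}(y)\,\wh{1_{A_3}}(y). \]
The frequency $y=0$ contributes exactly $|A_1|\,|A_2|\,|A_3|/|V|=\alpha_1\alpha_2\alpha_3|V|^2$, so it remains to bound the sum over the nonzero frequencies in absolute value by $\eps|V|^2$.

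The key point is that the hypothesis ``$A_1$ is $\eps$-uniform'' translates exactly into a Fourier $\ell^\infty$-bound on nonzero frequencies. Indeed, the codimension-$1$ subspaces of $V$ are precisely the hyperplanes $H_y=\{x\in V:\ip{x}{y}=0\}$ for $y\ne 0$, and for such a hyperplane $|H_y\cap A_1|-|A_1\del H_y|=\wh{1_{A_1}}(y)$; hence $\eps$-uniformity of $A_1$ is exactly the assertion that $|\wh{1_{A_1}}(y)|\le\eps|V|$ for every $y\ne 0$. I would then estimate the sum over $y\ne 0$ by pulling $\wh{1_{A_1}}(y)$ out in sup-norm, applying Cauchy--Schwarz to the remaining two factors, and using Parseval's identity $\sum_{y\in V}|\wh{1_{A_i}}(y)|^2=|V|\,|A_i|$. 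This gives a bound of $\tfrac{1}{|V|}\cdot\eps|V|\cdot\sqrt{|V|\,|A_2|}\cdot\sqrt{|V|\,|A_3|}=\eps|V|\sqrt{|A_2|\,|A_3|}\le\eps|V|^2$ on the nonzero-frequency contribution, and combining with the main term yields $|T(A_1,A_2,A_3)|\ge(\alpha_1\alpha_2\alpha_3-\eps)|V|^2$.

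I do not anticipate a genuine obstacle: once the uniformity hypothesis is recast as $|\wh{1_{A_1}}(y)|\le\eps|V|$ for nonzero $y$, the rest is a routine application of Cauchy--Schwarz and Parseval, and it is precisely the relevant instance of the generalized von Neumann inequality [\ref{tv06}, Lemma~4.13] cited in the text. The only step requiring any care is the translation itself, i.e.\ identifying codimension-$1$ subspaces with nonzero characters and verifying $|H_y\cap A_1|-|A_1\del H_y|=\wh{1_{A_1}}(y)$; a Fourier-free route also exists --- write $|T(A_1,A_2,A_3)|=\sum_{a_2\in A_2,\,a_3\in A_3}1_{A_1}(a_2+a_3)$ and split $1_{A_1}=\alpha_1+(1_{A_1}-\alpha_1)$, bounding the error term via the hyperplane form of uniformity --- but it reconstructs essentially the same computation.
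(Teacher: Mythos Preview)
Your argument is correct: the Fourier expansion, the identification of $\eps$-uniformity with the bound $|\wh{1_{A_1}}(y)|\le\eps|V|$ for $y\ne 0$, and the Cauchy--Schwarz/Parseval estimate all go through exactly as written. The paper itself does not prove this lemma but merely cites [\ref{tv06}, Lemma~4.13]; your computation is precisely the specialisation of that generalized von Neumann inequality to $\GF(2)^n$, so you are unpacking the same approach the paper invokes.
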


\begin{proof}[Proof of Theorem~\ref{weaker}]
Let $\eps > 0$. Let $\delta$ be a real number such
that $\eps(\eps-\delta)^2 > \delta > 0$, and let $c = W(\delta^{-3})$.

Let $M$ be a simple rank-$r$ triangle-free binary matroid.
If $|M| \le \eps 2^r$ then the theorem holds, so we may
assume for a contradiction that $|M| > \eps 2^r$. Let
$V = \GF(2)^r$ and $X \subseteq V$ be such that $M \cong M(X)$. 
	
By Green's regularity lemma there is an $\delta$-regular subspace
$H$ of $V$ with codimension at most $c$. As in the first
claim of the proof of Theorem~\ref{maintech}, there is some
$a \in Z$ such that $H_a(X)$ is $\delta$-regular and satisfies
$|H_a(X)| \ge \eps - \delta$. We may choose $a$ such that
either $a = 0$ or $a \notin H$. Let $A = H_a(X)$. 
	
If $|X \cap H| \le \eps |H|$, then the theorem holds.
Otherwise, by Lemma~\ref{triangles}, we have
$|T(A,A,X \cap H)| \ge (\eps(\eps - \delta)^2 - \delta)|H|^2 > 0$,
so there is some triple $(x,y,z)$ with $x+y+z = 0$,
where $x,y \in A$ and $z \in X \cap H$. Now
$\{x+a,y+a,z\}$ is a triangle of $M(X)$, a contradiction.  
\end{proof}

\section*{References}
\newcounter{refs}
\begin{list}{[\arabic{refs}]}
{\usecounter{refs}\setlength{\leftmargin}{10mm}\setlength{\itemsep}{0mm}}

\item\label{erdossimonovits}
P. Erd\H os, M. Simonovits,
On a valence problem in extremal graph theory,
Discrete Math. 5 (1973), 323-334.

\item\label{fk}
H. Furstenberg, Y. Katznelson,
IP-sets, Szemer\'edi's Theorem and Ramsey Theory,
Bull. Amer. Math. Soc. (N.S.) 14 no. 2 (1986), 275--278.

\item\label{green}
B. Green,
A Szemer\'{e}di-type regularity lemma in abelian groups, with applications,
Geometric \& Functional Analysis GAFA 15 (2005), 340--376.

\item \label{oxley}
J. G. Oxley, 
Matroid Theory,
Oxford University Press, New York (2011).

\item\label{tv06}
T. C. Tao and V. H. Vu, 
Additive Combinatorics, 
Cambridge Studies in Advanced Mathematics, 105, 
Cambridge University Press, Cambridge (2006). 

\item\label{t07}
C. Thomassen, 
On the chromatic number of pentagon-free graphs of large minimum degree, 
Combinatorica 27 (2007), 241--243.

\end{list}		
\end{document}